\tikzset{negated/.style={
    decoration={markings,
      mark= at position 0.5 with {
        \node[transform shape] (tempnode) {$\times$};
      }
    },
    postaction={decorate}
  }
}
\newcolumntype{P}[1]{>{\raggedright\arraybackslash}p{#1}}
\newtheorem{theorem}{Theorem}[section]
\newtheorem{lemma}[theorem]{Lemma}
\newtheorem{corollary}[theorem]{Corollary}
\newtheorem{proposition}[theorem]{Proposition}
\theoremstyle{remark}
\newtheorem{remark}[theorem]{Remark}
\newtheorem{question}[theorem]{Question}
\theoremstyle{definition}
\newtheorem{definition}[theorem]{Definition}
\newtheorem{example}[theorem]{Example}
\numberwithin{enumi}{theorem}
\newcommand{\inv}{^{-1}}
\title[Simultaneous Conjugacy Classes]{Simultaneous Conjugacy Classes as\\Combinatorial Invariants of Finite Groups}
\author{Dilpreet Kaur}
\address{Indian Institute  of Technology Jodhpur, NH 65, Surpura Bypass Rd, Karwar, Rajasthan 342037, India.}
\email{dilpreetkaur@iitj.ac.in}
\author{Sunil Kumar Prajapati$^*$}
\address{Indian Institute of Technology Bhubaneswar, Arugul Campus, Jatni, Khurda-752050, India.}
\email{skprajapati@iitbbs.ac.in}
\author{Amritanshu Prasad}
\address{The Institute of Mathematical Sciences (Homi Bhabha National Institute), CIT campus Taramani, Chennai 600113, India.}
\email{amri@imsc.res.in}
\thanks{$^{\textbf{*}}$ Corresponding author.\\ Dilpreet Kaur was supported by SERB National Postdoctoral Fellowship PDF/2017/000188 of the Department of Science \& Technology, India. Sunil Kumar Prajapati acknowledges the support from project grant SP096, given by Indian Institute of Technology Bhubaneswar and  SERB,  Government  of  India  for  financial  support  through  grant(MTR/2019/000118). Amritanshu Prasad was supported by a Swarnajayanti Fellowship of the Department of Science \& Technology, India}
\subjclass[2010]{20E45, 20D15, 20D60}
\keywords{simultaneous conjugacy classes, finite groups, isoclinic groups}
\begin{document}
\maketitle
\begin{abstract}
Let $G$ be a finite group.
We consider the problem of counting simultaneous conjugacy classes of $n$-tuples and simultaneous conjugacy classes of commuting $n$-tuples in $G$. 
Let $\alpha_{G,n}$ denote the number of simultaneous conjugacy classes of $n$-tuples, and $\beta_{G,n}$ the number of simultaneous conjugacy classes of commuting $n$-tuples in $G$.
The generating functions
$A_G(t) = \sum_{n\geq 0} \alpha_{G,n}t^n,$ 
and
$B_G(t) = \sum_{n\geq 0} \beta_{G,n}t^n$ are rational functions of $t$.
We show that $A_G(t)$ determines and is completely determined by the class equation of $G$.
We show that $\alpha_{G,n}$ grows exponentially with growth factor equal to the cardinality of $G$, whereas $\beta_{G,n}$ grows exponentially with growth factor equal to the maximum cardinality of an abelian subgroup of $G$.
The functions $A_G(t)$ and $B_G(t)$ may be regarded as combinatorial invariants of the finite group $G$.
We study dependencies amongst these invariants and the notion of isoclinism for finite groups.
We prove that the normalized functions $A_G(t/|G|)$ and $B_G(t/|G|)$ are invariants of isoclinism families.
\end{abstract}
\section{Introduction}
\label{sec:introduction}
Any group $G$ acts on its Cartesian power $G^n$ by simultaneous conjugation:
\begin{displaymath}
  g\cdot(x_1,\dotsc,x_n) = (gx_1 g\inv,\dotsc,gx_ng\inv).
\end{displaymath}
Let $G^{(n)}$ denote the subset of $G^n$ consisting of pairwise commuting tuples:
\begin{displaymath}
  G^{(n)} = \{(x_1,\dotsc,x_n)\in G^n \mid [x_i, x_j] = 1 \text{ for all } 1\leq i, j \leq n\}.
\end{displaymath}
Clearly $G^{(n)}$ is also closed under the above $G$-action. 
Even when conjugacy classes in $G$ are well-understood, it may not be easy to understand the $G$-orbits in $G^n$ and $G^{(n)}$.
For example, when $G$ is the matrix group $GL_m(F)$ for some field $F$, the determination of orbits in $G^2$ is the matrix pair problem, the quintessential wild problem in algebra.
The determination of orbits in $GL_m(F)^{(n)}$ corresponds to the classification of $m$ dimensional modules for the polynomial algebra $F[x_1,\dotsc,x_n]$ up to isomorphism.
This problem was solved for $m\leq 4$ when $F$ is a finite field by Sharma \cite{MR3485060}.

In this article, we restrict ourselves to finite groups and focus on the combinatorial problems of counting $G$-orbits in $G^n$ and $G^{(n)}$.
Let $\alpha_{G,n}$ denote the number of $G$-orbits in $G^n$, and $\beta_{G,n}$ the number of $G$-orbits in $G^{(n)}$.
Consider the generating functions:
\begin{displaymath}
  A_G(t) = \sum_{n=0}^\infty \alpha_{G,n}t^n,
\end{displaymath}
and
\begin{displaymath}
  B_G(t) = \sum_{n=0}^\infty \beta_{G,n}t^n.
\end{displaymath}
Note that $G^0$ and $G^{(0)}$ are the trivial group, and so $\alpha_{G,0} = \beta_{G,0}=1$. Further, note that $\alpha_{G,n} \geq \beta_{G,n}$ with $\alpha_{G,1} = \beta_{G,1}$, both equal to the number of conjugacy classes of $G$. 

We derive expressions (\ref{eq:3}) and (\ref{eq:5}) for $A_G(t)$ and $B_G(t)$, which show that they are rational functions of $t$.
The rational functions $A_G(t)$ and $B_G(t)$ may be regarded as combinatorial invariants of $G$.
The rational functions for $A_G(t)$ and $B_G(t)$ have simple poles that lie on the positive real axis.
By locating these poles, we show that $\alpha_{G,n}$ grows like a geometric series in $n$ with common ratio $|G|$, the cardinality of $G$, whereas $\beta_{G,n}$ grows with common ratio equal to the maximal cardinality of an abelian subgroup of $G$ (Theorem~\ref{theorem:asymptotic}).

We say that finite groups $G_1$ and $G_2$ are $A$-equivalent (resp., $B$-equivalent) if $A_{G_1}(t)=A_{G_2}(t)$ (resp., $B_{G_1}(t)=B_{G_2}(t)$).
$A$-equivalence is easy to characterize: two finite groups are $A$-equivalent if and only if they have the same class equation (Theorem~\ref{theorem:class-eq-A}).
We do not have a similar group-theoretic characterization of $B$-equivalence in general. However in case of a $p$-group $G$, coefficient of $t^n$ in $B_G(t)$ has an interesting meaning in algebraic topology: it is the Morava $K$-theory Euler characteristic the classifying space of $G$ \cite[Theorem B (Part 1)]{HKR}.
We have computed both $A_G(t)$ and $B_G(t)$ for all $p$-groups of rank up to $5$ in \cite{KPP}.  

The normalized functions $A_G(t/|G|)$ and $B_G(t/|G|)$ are invariants of isoclinism families (Theorem~\ref{corollary:family-invariants}).
Therefore isoclinic groups of the same order are both $A$-equivalent and $B$-equivalent.

By searching the small groups library of GAP \cite{GAP4}, all other possible dependencies amongst these three equivalence relations are ruled out and the counterexamples of smallest order are provided (Section~\ref{sec:counterexamples}).
For convenience, the group of order $n$ with GAP small groups identifier $(n,r)$ will be denoted $G(n,r)$.
The groups $G(54,6)$ and $G(54,8)$ have centres of different order, hence are not $A$-equivalent, but they are $B$-equivalent.
The groups $G(128,2022)$ and $G(128,1758)$ are $A$-equivalent, but not $B$-equivalent.
The group $G(18,1)$ (dihedral of order $18$) and $G(18,4)$ are $A$-equivalent and $B$-equivalent, but not isoclinic.

The relationship between these equivalences is summarized in Figure~\ref{fig:dependencies}.
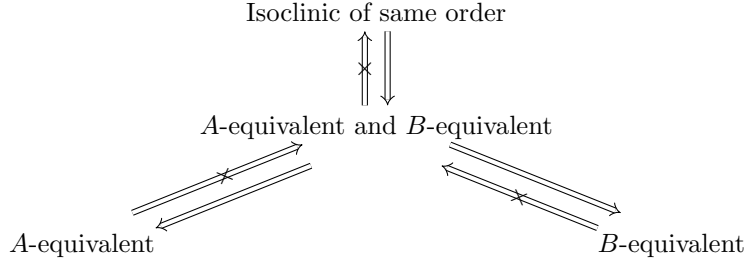
\begin{figure}
  \label{fig:dependencies}
  \begin{center}
    \begin{tikzcd}[arrows=Rightarrow, column sep=0.3cm, row sep=1cm, every arrow/.append style={shift left=1ex}]
      {}  
    & \text{Isoclinic of same order} 
    \arrow{d} 
    & {}
    \\
    {} 
    & \text{$A$-equivalent and $B$-equivalent}
    \arrow[negated]{u} 
    \arrow[shorten <= 6pt,shorten >= 6pt]{dl}
    \arrow[shorten <= 6pt]{dr}
    & {}
    \\
    \text{$A$-equivalent} 
    \arrow[negated, shorten >= 6pt]{ur}
    & {} 
    & \text{$B$-equivalent}
    \arrow[negated, shorten <= 6pt,shorten >= 6pt]{ul}
  \end{tikzcd}
\end{center}
\caption{Dependencies between equivalence relations}
\end{figure}
In Section ~\ref{GAP: code}, we provide the GAP code of the 
functions $A_G(t)$ and $B_G(t)$ for the convenience of the reader. 
\section{Rationality of $A_G(t)$ and $B_G(t)$}
In this section, we present algorithms to compute the formal power series $A_G(t)$ and $B_G(t)$, from which it follows that they are rational functions of $t$.

For each $g\in G$, let $Z_G(g)$ denote its centralizer.
The function $A_G(t)$ can be computed by a simple application of Orbit counting lemma to the action of $G$ on $G^n$:
\begin{equation}
  \label{eq:2}
  \alpha_{G,n} = \frac 1{|G|} \sum_{g\in G} |Z_G(g)|^n.
\end{equation}
Therefore
\begin{align}
  \nonumber
  A_G(t) & = \sum_{n=0}^\infty \alpha_{G,n} t^n\\
  \nonumber
  & = \sum_{n=0}^\infty \frac 1{|G|} \sum_{g\in G}|Z_G(g)|^n t^n\\
  \label{eq:3}
  & = \frac 1{|G|}\sum_{g\in G} \frac 1{1-|Z_G(g)|t},
\end{align}
a rational function of $t$.

In the following theorem, we show that information contained in the generating function $A_G(t)$ is the same as the information contained in the class equation of
$G$.
\begin{theorem}
  \label{theorem:class-eq-A}
  Let $G$ and $H$ be finite groups.
  Then $A_G(t) = A_H(t)$ if and only if $G$ and $H$ have the same class equation.
\end{theorem}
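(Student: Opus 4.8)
The plan is to read off a partial fraction decomposition from the explicit formula (\ref{eq:3}) and then argue that this decomposition and the class equation encode exactly the same data. First I would group the sum over $g\in G$ by centralizer size: writing $m_z(G)$ for the number of $g\in G$ with $|Z_G(g)| = z$, formula (\ref{eq:3}) becomes
\[
  A_G(t) = \frac{1}{|G|} \sum_{z} \frac{m_z(G)}{1 - zt},
\]
where $z$ runs over the distinct centralizer sizes occurring in $G$. Distinct values of $z$ give distinct simple poles at $t = 1/z$, so this is exactly the partial fraction decomposition of $A_G(t)$. The forward implication is then immediate: an element lying in a conjugacy class $C$ has centralizer of order $|G|/|C|$, so the multiset of centralizer sizes, and hence the right-hand side above, is determined by the multiset of conjugacy class sizes. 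Two groups with the same class equation therefore yield the same $A_G(t)$.

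For the converse I would appeal to the uniqueness of the partial fraction decomposition. If $A_G(t) = A_H(t)$ as rational functions, then the two expansions must agree term by term: $G$ and $H$ have the same poles, i.e. the same set of distinct centralizer sizes, and the numerator coefficient at each common pole $1/z$ agrees, giving $m_z(G)/|G| = m_z(H)/|H|$. The crucial step is to recover the order $|G|$ from $A_G(t)$ alone. Since every central element (in particular the identity) has centralizer equal to $G$, the largest centralizer size is exactly $|G|$, so $1/|G|$ is the smallest positive pole of $A_G(t)$. Matching the smallest poles of $A_G$ and $A_H$ forces $|G| = |H|$, and then the coefficient identities upgrade to $m_z(G) = m_z(H)$ for every $z$.

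It remains to convert equality of the functions $z\mapsto m_z(\cdot)$ into equality of class equations. For each divisor $c$ of $|G|$, the elements with centralizer of size $|G|/c$ are precisely those in classes of size $c$, and they form $c^{-1}m_{|G|/c}(G)$ such classes; thus the number of conjugacy classes of each size is a function of $|G|$ and the data $(m_z(G))$, and conversely. Since $|G| = |H|$ and all the $m_z$ agree, $G$ and $H$ have the same number of conjugacy classes of every size, which is to say the same class equation.

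The one point I expect to require care is the recovery of $|G|$: matching residues only yields the normalized numbers $m_z/|G|$, and it is the observation that the dominant pole sits at $1/|G|$, because central elements maximize the centralizer order, that actually pins down the group order. The remaining steps are routine bookkeeping with the (unique) partial fraction expansion.
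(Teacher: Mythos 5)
Your proof is correct, but in the converse direction it takes a genuinely different route from the paper's. The paper recovers the counts $z_m$ (your $m_z$) from the coefficient sequence: it views the identity $\alpha_{G,n} = \frac{1}{|G|}\sum_m z_m m^n$ (its Eq.~\eqref{eq:4}) for $n=1,\dotsc,N$ as a matrix equation and invokes the non-singularity of the $N\times N$ Vandermonde matrix, whereas you work with the rational function itself, matching poles and residues via uniqueness of the partial fraction decomposition (the paper's Eq.~\eqref{eq:6}, which it states only after the theorem and does not use in the proof). The two key lemmas are cousins — both amount to linear independence of $n\mapsto m^n$, respectively $t\mapsto(1-mt)^{-1}$, over distinct $m$ — but each buys something. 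The paper's version is quantitative: it shows the first $|G|$ coefficients of $A_G(t)$ already determine the class equation. Your version deals more explicitly with the one delicate point: \emph{a priori} $|G|\neq |H|$, and the paper's matrix identity presupposes knowledge of $N=|G|$ and of the prefactor $1/|G|$, so its argument implicitly needs exactly the observation you spell out — that $|G|$ is recoverable from $A_G(t)$ alone, as the reciprocal of the smallest pole, since central elements realize the maximal centralizer size. Your final bookkeeping step (the classes of size $c$ are counted by $c^{-1}m_{|G|/c}$) is also correct, so the proposal is complete.
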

\begin{proof}
  Let $z_m$ denote the number of elements of $G$ having centralizer of cardinality $m$ (and therefore conjugacy class of cardinality $|G|/m$).
  Note that $z_m=0$ if $m>|G|$.
  Clearly, the sequence $z_1,z_2,\dotsc, z_N$, where $N = |G|$ completely determines, and is completely determined by the class equation of $G$.
  The expression \eqref{eq:2} for $A_G(t)$ can be rewritten as:
  \begin{equation}
    \label{eq:4}
    \alpha_{G,n} = \frac 1{|G|}\sum_{m = 1}^\infty z_m m^n.
  \end{equation}
  Thus the coefficients of $A_G(t)$ are completely determined by the class equation of $G$.

  Conversely, let $N = |G|$.
  The identity \eqref{eq:4} can be viewed as a matrix identity:
  \begin{equation}
    \label{eq:7}
    \begin{pmatrix}
      \alpha_{G,1}\\
      \alpha_{G,2}\\
      \vdots\\
      \alpha_{G,N}
    \end{pmatrix}
     =
     \frac 1{|G|}
    \begin{pmatrix}
      1 & 2 & \cdots & N\\
      1^2 & 2^2 & \cdots & N^2\\
      \vdots & \vdots & \ddots & \vdots\\
      1^N & 2^N & \cdots & N^N
    \end{pmatrix}
    \begin{pmatrix}
      z_1\\
      z_2\\
      \vdots\\
      z_N
    \end{pmatrix}.
  \end{equation}
  The non-singularity of the Vandermonde matrix in this identity implies that the sequence $z_1,\dotsc, z_N$ (and hence the class equation of $G$) is completely determined by the first $N$ coefficients of $A_G(t)$.
\end{proof}

The expression \eqref{eq:4} suggests an alternative form for Eq.~\eqref{eq:3}:
\begin{equation}
  \label{eq:6}
  A_G(t) = \frac 1{|G|}\sum_{m = 1}^\infty \frac{z_m}{1-mt},
\end{equation}
which gives the partial fraction decomposition of $A_G(t)$.

Orbit counting lemma cannot be used to compute $B_G(t)$ in this way, because the entries of a tuple in $G^{(n)}$ are not independently chosen (they must commute with each other).
Instead, a recursive process is used.

Let $g\in G$.
The map $(g, g_1,\dotsc, g_{n-1})\to (g_1,\dotsc,g_{n-1})$ induces a bijection from the set of $G$-orbits in $G^{(n)}$ which contain an element whose first coordinate is $g$ onto the set of $Z_G(g)$-orbits in $Z_G(g)^{(n-1)}$.
Let $c_H$ denote the number of conjugacy classes of $G$ whose centralizer is isomorphic to a subgroup $H$ of $G$. 
Then the bijection that we have just defined gives rise to the identity
\begin{equation*}
  \beta_{G,n} = \sum_H c_H \beta_{H,n-1},
\end{equation*}
for each $n\geq 1$, the sum on the right hand side being over isomorphism classes of subgroups of $G$.
Consequently,
\begin{align*}
  B_G(t) & = 1 + \sum_{n=1}^\infty \beta_{G,n}t^n\\
  & = 1 + \sum_{n=1}^\infty \sum_H c_H \beta_{H,n-1}t^n\\
  & = 1 + \sum_H c_H t B_H(t).
\end{align*}
Bringing the term with $H = G$ from the right hand side to the left gives
\begin{equation}
  \label{eq:5}
  (1 - c_Gt)B_G(t) = 1 + \sum_{|H|<|G|} c_H t B_H(t).
\end{equation}
Note that $c_G$ is the cardinality of the centre of $G$.
The above identity establishes the rationality of $B_G(t)$ by induction on $|G|$ (the base cases are groups of prime order, which are abelian, and therefore $B_G(t)$ is obviously rational for them).

\section{Asymptotic Formulas}
\label{sec:expon-growth-rates}
If a power series $\sum_{n=0}^\infty a_n t^n$ represents a proper rational function given as a partial fraction:
\begin{equation}
  \label{eq:par-fra}
  \sum_{n=0}^\infty a_n t^n = \sum_{i=1}^N \frac{c_i}{1-r_it},
\end{equation}
where $c_i$ and $r_i$ are real numbers for $i=1,\dotsc,N$, and $r_1>r_2>\dotsb>r_N$, then
\begin{displaymath}
  a_n = \sum_{i=1}^N c_i r_i^n,
\end{displaymath}
from which we get:
\begin{equation}
  \label{eq:asymptotic}
  a_n \sim c_1r_1^n.
\end{equation}
Here, we write $a_n\sim b_n$ if $\lim_{n\to \infty} a_n/b_n=1$.

\begin{theorem}
  \label{theorem:asymptotic}
  For any finite group $G$, we have:
  \begin{align*}
    \alpha_{G,n}&\sim |Z(G)||G|^{n-1},\\
    \beta_{G,n}&\sim Ca^n,
  \end{align*}
  where $a$ is the maximal cardinality of an abelian subgroup of $G$, and $C$ is some positive constant.
\end{theorem}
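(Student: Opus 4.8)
The plan is to treat the two asymptotics separately; in each case I locate the dominant (smallest positive) pole of the relevant rational function and read off the leading coefficient via \eqref{eq:asymptotic}.

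For $\alpha_{G,n}$ the work is essentially done by the partial fraction decomposition \eqref{eq:6}. The poles of $A_G(t)$ occur at $t=1/m$ for those $m$ with $z_m\neq 0$, and the largest such $m$ is $|G|$, since $|Z_G(g)|=|G|$ precisely when $g\in Z(G)$; moreover $z_{|G|}=|Z(G)|\geq 1$. Thus the dominant pole sits at $t=1/|G|$ with coefficient $z_{|G|}/|G|=|Z(G)|/|G|$, and \eqref{eq:asymptotic} gives $\alpha_{G,n}\sim (|Z(G)|/|G|)|G|^n=|Z(G)||G|^{n-1}$.

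The second statement is the substantive one. First I would replace the orbit count by a fixed-point count: applying Burnside's lemma to the conjugation action of $G$ on $G^{(n)}$, and observing that the tuples fixed by $g$ are exactly the commuting tuples with all entries in $Z_G(g)$, one obtains
\begin{equation*}
  \beta_{G,n} = \frac{1}{|G|}\sum_{g\in G}\bigl|Z_G(g)^{(n)}\bigr|.
\end{equation*}
It therefore suffices to understand the growth of $|H^{(n)}|$ for an arbitrary finite group $H$. Writing $a_H$ for the largest order of an abelian subgroup of $H$, I claim $\sum_n |H^{(n)}|t^n$ is a proper rational function of the form $\sum_d N_d/(1-dt)$, where $d$ runs over the orders of abelian subgroups of $H$ and the integers $N_d$ satisfy $N_{a_H}>0$. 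To see this, partition the commuting tuples according to the (necessarily abelian) subgroup they generate: since every tuple in $A^n$ commutes when $A$ is abelian, Möbius inversion over the subgroup lattice of each abelian $A\leq H$ yields $|H^{(n)}|=\sum_A\sum_{B\leq A}\mu(B,A)|B|^n=\sum_d N_d d^n$. The coefficient $N_{a_H}$ counts the abelian subgroups of $H$ of maximal order $a_H$ (for such $B$ the only abelian overgroup is $B$ itself), so $N_{a_H}\geq 1$.

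Assembling these pieces finishes the argument. Each $\sum_n |Z_G(g)^{(n)}|t^n$ has simple poles at the reciprocals of the abelian subgroup orders of $Z_G(g)$, with dominant pole at $1/a_{Z_G(g)}$; since $a_{Z_G(g)}=a$ exactly when $Z_G(g)$ contains an abelian subgroup of the maximal order $a$, and this happens for instance whenever $g$ lies in such a subgroup $A$ (then $A\subseteq Z_G(g)$), the dominant pole of $B_G(t)$ is at $t=1/a$. Its coefficient is $C=\frac{1}{|G|}\sum_g N_a^{(g)}$, a sum of nonnegative terms whose $g=e$ summand already counts the abelian subgroups of $G$ of order $a$ and is at least $1$; hence $C>0$. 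Applying \eqref{eq:asymptotic} with $r_1=a$ gives $\beta_{G,n}\sim Ca^n$. The main obstacle is the middle step: one must verify that the dominant pole contributed by $|H^{(n)}|$ is genuinely simple and carries a positive coefficient, which is exactly what the Möbius computation guarantees by isolating the top term $N_{a_H}$. As an alternative to Burnside one can run the same analysis through the recursion \eqref{eq:5}, checking inductively that the extra factor $1-c_Gt$ contributes only a pole at $1/c_G$ with $c_G=|Z(G)|<a$ for nonabelian $G$, hence does not disturb the pole at $1/a$.
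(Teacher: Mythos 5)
Your proof is correct, and for the substantive half (the growth of $\beta_{G,n}$) it takes a genuinely different route from the paper. The $\alpha_{G,n}$ part coincides with the paper's: both read off the dominant pole $t=1/|G|$ and its coefficient $|Z(G)|/|G|$ from the partial fraction form \eqref{eq:3}/\eqref{eq:6}. For $\beta_{G,n}$ the paper iterates the recursion \eqref{eq:5}: each path in the recursion tree contributes a product of factors $\frac{c^{H_{i-1}}_{H_i}t}{1-c^{H_i}_{H_i}t}$ whose poles are reciprocals of the (strictly increasing) centers of successive centralizers, terminating at an abelian group; the dominant pole is then governed by the largest abelian subgroup expressible as an iterated centralizer $Z_G(g_1)\cap\dotsb\cap Z_G(g_k)$ with $(g_1,\dotsc,g_k)\in G^{(k)}$, and the key lemma is that \emph{every} maximal abelian subgroup has this form (enumerate its elements and intersect their centralizers). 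You instead rehabilitate Burnside's lemma --- which the paper sets aside for $B_G(t)$, but only in its naive form --- by using the correct fixed-point sets $Z_G(g)^{(n)}$, and then you control $|H^{(n)}|$ by a Philip Hall--style M\"obius inversion over the subgroup lattice, isolating the top coefficient $N_{a_H}$ as the number of abelian subgroups of maximal order. Your approach buys an explicit partial-fraction expansion of $B_G(t)$ and a fully transparent positivity argument for the leading constant: all contributions to the pole at $t=1/a$ are manifestly nonnegative and the $g=e$ term is positive; one can even evaluate $C$ in closed form, since $\sum_g N_a^{(g)}$ counts pairs $(g,A)$ with $A$ abelian of order $a$ and $g$ centralizing $A$, and the centralizer of such an $A$ is $A$ itself, giving $C=am/|G|$ with $m$ the number of abelian subgroups of order $a$. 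This positivity is precisely the point that the paper's proof leaves somewhat implicit (it requires checking that the path contributions to the pole at $1/a$ cannot cancel). What the paper's approach buys in exchange is that it stays entirely within the recursion already used to prove rationality, and it characterizes the growth rate intrinsically via iterated centralizers, as in \eqref{eq:growthB}.
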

\begin{proof}
  Let $r_1>r_2>\dotsb>r_N$ denote the set of orders of the centralizers $Z_G(g)$ as $g$ varies in $G$.
  Then collecting summands with the same centralizer degree, \eqref{eq:3} can be rewritten in the form \eqref{eq:par-fra}.
  The constant $c_i$ is the probability that an element of $G$, chosen uniformly at random, has centralizer of order $r_i$. 

  Given groups $H$ and $K$, let $c^H_K$ denote the number of conjugacy classes of elements in $H$ whose centralizer in $H$ is isomorphic to $K$.
  In particular, $c^H_H$ is the cardinality of the centre of $H$.
  The sum on the right hand side of the recursive identity (\ref{eq:5}) is trivial if and only if $G$ is abelian.
  Otherwise, each non-zero term on the right hand side corresponds to a subgroup $H$ of $G$ which is the centralizer of a non-central element $g\in G$.
  Thus the centre of $H$ contains the centre of $G$ as well as the non-central elements of $G$, so that $c^H_H>c^G_G$.
  Each path in the recursion tree when (\ref{eq:5}) is iterated leads to a term of the form:
  \begin{displaymath}
   \bigg( \frac 1{1-c^G_Gt}\bigg) \times \bigg( \frac{c^G_{H_1}t}{1-c^{H_1}_{H_1}t}\bigg) \times \dotsb \times \bigg(\frac{c^{H_{k-1}}_{H_{k-1}}t}{1-c^{H_{k-1}}_{H_k}t}\bigg),
  \end{displaymath}
  where $G=H_0,H_1,\dotsc,H_k=H$ is a sequence of groups such that $H_i$ is the centralizer of a non-central element of $H_{i-1}$ for $i=1,\dotsc,k$ and $H_k$ is abelian.
  Thus, when $B_G$ is expanded in a partial fraction of the form (\ref{eq:par-fra}), where $r_1$ is the maximal cardinality of an abelian subgroup $H$ of $G$ for which there exists a sequence $G=H_0,H_1,\dotsc,H_k=H$ and elements $g_i\in H_{i-1}$, with $H_i=Z_{H_{i-1}}(g_i)$ for each $i=1,\dotsc, k$.
  Equivalently, $H$ is an abelian subgroup of maximal cardinality of the form:
  \begin{equation}
    \label{eq:growthB}
    H = Z_G(g_1)\cap Z_G(g_2)\cap \dotsb \cap Z_G(g_k),
  \end{equation}
  where $(g_1,\dotsc,g_k)\in G^{(k)}$.

  Let $H$ be \emph{any} maximal abelian subgroup of $G$.
  Let $k=|H|$ and let $g_1,\dotsc,g_k$ be an enumeration of the elements of $H$.
  We claim that (\ref{eq:growthB}) holds, so every maximal abelian subgroup of $G$ is of this form.
  Indeed, the right hand side of (\ref{eq:growthB}) consists of all the elements of $G$ that commute with every element of $H$.
  By maximality of $H$, this is exactly the set of elements of $H$.
  This proves the assertion about the growth of $\beta_{G,n}$.
\end{proof}

\section{$A_G(t), B_G(t)$ and isoclinism}
\label{sec:a_gt-b_gt-isoclinism}
In 1940, Hall \cite{Hall} introduced isoclinism, an equivalence relation on the class of all groups.
Isoclinism makes precise the idea that two groups have the same commutator function.
This section is devoted to studying the relation between isoclinism of groups and the formal series $A_G(t), B_G(t)$.
\begin{definition}[Isoclinism]
  Two  finite groups $G$ and $H$ are said to be \emph{isoclinic} if
  there exist isomorphisms $\theta : G/Z(G)\longrightarrow H/Z(H)$ and $\phi: G{}'\longrightarrow H{}'$
  such that the following diagram is commutative:
  \begin{equation*}
    \begin{tikzcd}
      G/Z(G)\times G/Z(G) \arrow{d}{a_G} \arrow{r}{\theta \times \theta}
      & H/Z(H)\times H/Z(H) \arrow{d}{a_H} \\
      G{}' \arrow{r}{\phi}
      & H{}',
    \end{tikzcd}
  \end{equation*}
  where $a_G(g_1Z(G), g_2Z(G)) = [g_1,g_2]$, for $g_1,g_2\in G$, and $a_H(h_1Z(H), h_2Z(H)) = [h_1,h_2]$ for $h_1,h_2\in H$.
\end{definition}
The resulting pair $(\theta, \phi)$ is called an \emph{isoclinism} of $G$ onto $H$.
Isoclinism is an equivalence relation on groups strictly weaker than isomorphism.
For instance, a group is isoclinic to the trivial group if and only if it is abelian.
More generally, isoclinic nilpotent groups have the same nilpotency class.
\renewcommand{\theenumi}{\thetheorem.\arabic{enumi}}
\begin{remark}[{\cite[p.~134]{Hall}}]\label{remark_isoclinism}
  Let $(\theta, \phi)$ be an isoclinism from $G$ onto $H$.
  \begin{enumerate}
  \item \label{item:9}
  Let $q_G:G\to G/Z(G)$ and $q_H:H\to H/Z(H)$ denote the quotient maps.
  For each subgroup $K$ of $G$ containing $Z(G)$ let $\bar\theta(K)=q_H^{-1}(\theta(q_G(K)))$.
  Then $\bar\theta$ is a bijection from the set of subgroups of $G$ containing $Z(G)$ onto the set of subgroups of $H$ containing $Z(H)$, and $K$ and $\bar\theta(K)$ are isoclinic.
  For any element $g\in G$, let $h\in H$ be such that $\theta(q_G(g))=q_H(h)$.
  Then $\bar\theta(Z_G(g))=Z_H(h)$.
  In particular, a centralizer in the group $G$ is isoclinic to a centralizer in $H$.
\item The isoclinism  $(\theta, \phi)$ from $G$ onto $H$ gives rise to a bijective correspondence between quotient groups $G/K$ and $H/L$, where $K\subseteq G{}'$ and $L\subseteq H{}'$, and corresponding quotient groups are isoclinic.
\end{enumerate}
\end{remark}
\begin{remark}[{\cite[Lemma 1.3(a)]{JCB}}]\label{remark_subgroup_isoclinism}
 Let $G$ be a group and $H$ be its subgroup such that $HZ(G)=G,$ where $Z(G)$ denotes the center of group $G.$ Then group $G$ is isoclinic to $H.$
\end{remark}

Isoclinism has strong implications for $A_G(t)$ and $B_G(t)$:
\begin{theorem}\label{A_B_isoclinism}
  Let $G$ and $H$ be isoclinic groups.
  Then
  \begin{enumerate}
  \item  $A_G(t)=A_H\left(\frac{|G|}{|H|}t\right).$
  \item  $B_G(t)=B_H\left(\frac{|G|}{|H|}t\right).$
  \end{enumerate}
\end{theorem}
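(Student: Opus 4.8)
The plan rests on two structural facts that I would record first. Since $\theta$ is an isomorphism $G/Z(G)\to H/Z(H)$, we have $|G/Z(G)|=|H/Z(H)|=:q$, and hence
\[
  \lambda := \frac{|G|}{|H|} = \frac{|Z(G)|\,q}{|Z(H)|\,q} = \frac{|Z(G)|}{|Z(H)|}.
\]
Moreover, by Remark~\ref{remark_isoclinism}\,(\ref{item:9}), whenever $g\in G$ and $h\in H$ satisfy $\theta(q_G(g))=q_H(h)$, one has $\bar\theta(Z_G(g))=Z_H(h)$, the groups $Z_G(g)$ and $Z_H(h)$ are isoclinic, and (because $\bar\theta$ multiplies the order of a subgroup containing the center by $|Z(H)|/|Z(G)|$) their orders satisfy $|Z_G(g)|=\lambda\,|Z_H(h)|$. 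These are the only group-theoretic inputs I would need.

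For part (1) I would argue directly from \eqref{eq:3}, with no induction. The centralizer $Z_G(g)$ depends only on the coset $gZ(G)$, since $Z(G)$ commutes with everything, so grouping the $|G|$ terms of \eqref{eq:3} into the $q$ cosets of $Z(G)$ gives $A_G(t)=\tfrac1q\sum_{\bar g\in G/Z(G)} \bigl(1-|Z_G(g)|t\bigr)^{-1}$. Reindexing by $\theta$ and substituting $|Z_G(g)|=\lambda|Z_H(h)|$ turns the summand into $\bigl(1-|Z_H(h)|(\lambda t)\bigr)^{-1}$, so the whole expression becomes $A_H(\lambda t)$, which is (1). (Equivalently, this records that the class equations of $G$ and $H$ agree after the rescaling $m\mapsto\lambda m$, which is the content of Theorem~\ref{theorem:class-eq-A} in rescaled form.)

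For part (2) I would use strong induction on $|G|$, asserting $B_G(t)=B_H(\tfrac{|G|}{|H|}t)$ for every $H$ isoclinic to $G$; the base case is $G$ abelian, where every isoclinic $H$ is abelian and both sides equal $(1-|G|t)^{-1}$. For the inductive step I would first isolate the central conjugacy classes in \eqref{eq:5}, writing it as
\[
  (1-|Z(G)|t)\,B_G(t) = 1 + \sum_{C\text{ non-central}} t\,B_{Z_G(g_C)}(t),
\]
the sum running over non-central conjugacy classes with representatives $g_C$. Since $C$ has size $|G|/|Z_G(g_C)|$ and both $Z_G(g)$ and $B_{Z_G(g)}$ are constant on cosets of $Z(G)$, this sum equals $\tfrac{t}{q}\sum_{\bar g\neq 1}|Z_G(g)|\,B_{Z_G(g)}(t)$, a weighted sum over nonidentity cosets in $G/Z(G)$. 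Now I apply $\theta$ term by term: each such $\bar g$ has $|Z_G(g)|=\lambda|Z_H(h)|$, and because $|Z_G(g)|<|G|$ the induction hypothesis gives $B_{Z_G(g)}(t)=B_{Z_H(h)}(\lambda t)$. Substituting shows the right-hand side coincides with the right-hand side of the corresponding identity for $H$ evaluated at $\lambda t$, while the left factor matches because $|Z(G)|t=|Z(H)|(\lambda t)$. Thus $B_G(t)$ and $B_H(\lambda t)$ satisfy the same identity $(1-|Z(G)|t)\,X(t)=R(t)$ for one common power series $R(t)$ fixed by the inductive hypothesis, and since $1-|Z(G)|t$ is invertible as a power series, they must be equal.

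The main obstacle is that, unlike for $A_G$, there is no natural bijection between the conjugacy classes of isoclinic groups $G$ and $H$ — for instance $G$ and $G\times A$ with $A$ abelian are isoclinic but have very different numbers of classes. The device that overcomes this is to push the recursion down to $G/Z(G)$, exploiting that both the centralizer subgroup and its $B$-function are constant on cosets of $Z(G)$; on the quotient the isomorphism $\theta$ supplies the needed bijection, and the uniform scaling $\lambda$ of centralizer orders is exactly what makes the inductive identities close up.
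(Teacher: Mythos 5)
Your proposal is correct and takes essentially the same route as the paper's own proof: part (1) is the paper's argument that $\theta$ matches the center-coset data of centralizer orders (the content of its identity $A_G(t)=A_{G/Z(G)}(|Z(G)|t)$, applied to \eqref{eq:3}), and part (2) is the same induction on $|G|$ with abelian base case, using the recursion \eqref{eq:5} together with Remark~\ref{remark_isoclinism}, namely that $Z_G(g)$ and $Z_H(h)$ are isoclinic with orders in ratio $|G|/|H|$. The only difference is bookkeeping: you sum over non-central conjugacy classes (equivalently, non-identity cosets of $Z(G)$), whereas the paper groups classes by the isomorphism type of the centralizer via the coefficients $c^G_K$; nothing essential changes.
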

\begin{proof}
  Let $(\theta,\phi)$ be an isoclinism of $G$ onto $H$.
  Then $\theta$ induces a bijection from the elements of $G/Z(G)$ onto elements of $H/Z(H)$ which preserves the orders of centralizers.
  Let $z_m$ denote the number of elements of $G/Z(G)$ with centralizer of order, and let $z^G_m$ denote the number of elements of $G$ with centralizer of order $m$.
  It follows that, for every positive integer $m$, $z^G_{|Z(G)|m}=|Z(G)|z_m$.
  Also, $z^G_m=0$ if $m$ is not a multiple of $|Z(G)|$.
  By (\ref{eq:6}), we get:
  \begin{displaymath}
    A_G(t) = \frac 1{|G|}\sum_m \frac{|Z(G)|z^G_m}{1-|Z(G)|mt} = A_{G/Z(G)}(|Z(G)|t).
  \end{displaymath}
  A similar identity holds for $H$.
  Since $G/Z(G)$ is isomorphic to $H/Z(H)$, $A_{G/Z(G)}=A_{H/Z(H)}$.
  Therefore
  \begin{align*}
    A_G(t) & = A_{G/Z(G)}(|Z(G)|t)\\
           & = A_{H/Z(H)}(|Z(G)|t)\\
           & = A_{H}(|Z(G)||Z(H)|^{-1}t)\\
           & = A_{H}(|G||H|^{-1}t)
  \end{align*}
  and the first identity of the theorem follows.

  For the second identity, assume $|G|\leq |H|$ and proceed by induction on $|G|$.
  If $G$ is an abelian group then $H$, being isoclinic to $G$ is also abelian.
  We have:
  \begin{displaymath}
    B_G(t)=\frac{1}{1-|G|t}=\frac{1}{1-|H|(|G||H|^{-1}t)}=B_H(|G||H|^{-1}t).   
  \end{displaymath}
  In particular, the identity holds for all groups of prime order.

  Suppose the identity holds for groups of order less than $|G|$.
  When $B_G(t)$ is computed using (\ref{eq:5}), the sum on the right hand side consists of subgroups $K$ of $G$ containing $Z(G)$.
  For each such $K$, we have:
  \begin{displaymath}
    B_K(t)=B_{\bar\theta(K)}(|Z(G)||Z(H)|^{-1}t)
  \end{displaymath}
  (by induction hypothesis and Remark~\ref{item:9}).
  Also,
  \begin{displaymath}
    c^G_K = c^{G/Z(G)}_{K/Z(G)}|Z(G)| = c^{H/Z(H)}_{\theta(K/Z(G))}|Z(G)| = c^H_{\bar\theta(K)}|Z(G)||Z(H)|^{-1}.
  \end{displaymath}
  And finally,
  \begin{displaymath}
    |G|/|H|=|Z(G)|/|Z(H)|=|K|/|\bar\theta(K)|.
  \end{displaymath}
  Applying the above observations to (\ref{eq:5}) gives
  \begin{align*}
    (1-|Z(G)|t)B_G(t) & = 1 + t\sum_{K\subset G} c^G_KB_K(t)\\
                & = 1 + t|Z(G)||Z(H)|^{-1}\sum_{K\subset G} c^H_{\bar\theta(K)}B_{\bar\theta(K)}(|Z(G)||Z(H)|^{-1}t)\\
                & = (1-|Z(G)|t)B_H(|Z(G)||Z(H)|^{-1}t),
  \end{align*}
  which is equivalent to the second identity.
\end{proof}
\begin{corollary}
  \label{A_B_for_isoclinic_same_order}
  Isoclinic groups of the same order are $A$-equivalent and $B$-equivalent.
\end{corollary}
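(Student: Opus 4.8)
The plan is to deduce this immediately from Theorem~\ref{A_B_isoclinism}, which does all the real work; the corollary is simply the specialization of that theorem to the case of equal orders. Since $G$ and $H$ are assumed isoclinic, both identities of Theorem~\ref{A_B_isoclinism} apply, giving
\begin{displaymath}
  A_G(t) = A_H\left(\frac{|G|}{|H|}t\right)
  \quad\text{and}\quad
  B_G(t) = B_H\left(\frac{|G|}{|H|}t\right).
\end{displaymath}
The additional hypothesis of the corollary is that $|G|=|H|$, so the scaling factor satisfies $|G|/|H|=1$.

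Substituting this value into the two identities above collapses the argument $\frac{|G|}{|H|}t$ to simply $t$, yielding $A_G(t)=A_H(t)$ and $B_G(t)=B_H(t)$. By the definitions of $A$-equivalence and $B$-equivalence introduced in Section~\ref{sec:introduction}, these two equalities are precisely the assertions that $G$ and $H$ are $A$-equivalent and $B$-equivalent, respectively, which completes the argument.

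There is no real obstacle here: all the genuine content—the bijection on centralizer orders induced by the isoclinism, the reduction to the central quotient via Remark~\ref{remark_isoclinism}, and the inductive handling of the recursion \eqref{eq:5} for $B_G(t)$—has already been carried out in the proof of Theorem~\ref{A_B_isoclinism}. The corollary is a one-line specialization, and its only purpose is to record the conceptually important consequence that isoclinism together with equality of order forces both combinatorial invariants to coincide exactly, rather than merely up to the rescaling $t\mapsto \frac{|G|}{|H|}t$.
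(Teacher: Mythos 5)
Your proof is correct and follows exactly the same route as the paper, which simply cites Theorem~\ref{A_B_isoclinism}: setting $|G|=|H|$ makes the rescaling factor $|G|/|H|$ equal to $1$, so both identities collapse to $A_G(t)=A_H(t)$ and $B_G(t)=B_H(t)$. Your write-up just spells out this specialization in more detail than the paper's one-line proof.
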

\begin{proof}
  This is an immediate consequence of Theorem~\ref{A_B_isoclinism}.
\end{proof}
\begin{remark}
  Corollary~\ref{A_B_for_isoclinic_same_order} and Theorem~\ref{theorem:class-eq-A} imply the well-known fact that isoclinic groups of the same order have the same class equation.
\end{remark}
\begin{corollary}
  Let $G$ be a group and $H$ be an abelian group. Then
  $A_{G\times H}(t)=A_{G}(|H|t)$ and $B_{G\times H}(t)=B_{G}(|H|t)$.
\end{corollary}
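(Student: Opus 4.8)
The plan is to reduce the corollary to Theorem~\ref{A_B_isoclinism} by showing that $G\times H$ is isoclinic to $G$ whenever $H$ is abelian; once this is established, the scaling identities fall out immediately from the theorem.

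First I would assemble the structural data of $G\times H$. Because $H$ is abelian we have $Z(H)=H$, so $Z(G\times H)=Z(G)\times H$, and consequently the assignment $(g,h)Z(G\times H)\mapsto gZ(G)$ is a well-defined isomorphism $\theta\colon (G\times H)/Z(G\times H)\to G/Z(G)$. Likewise $H'=\{1\}$ forces $(G\times H)'=G'\times\{1\}$, so $\phi\colon (c,1)\mapsto c$ is an isomorphism $(G\times H)'\to G'$. These two isomorphisms are the natural candidates for the components of an isoclinism of $G\times H$ onto $G$.

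Next I would check that $(\theta,\phi)$ is genuinely an isoclinism, that is, that the defining commutator square commutes. The crux is the computation $[(g_1,h_1),(g_2,h_2)]=([g_1,g_2],[h_1,h_2])=([g_1,g_2],1)$, where the second equality uses exactly that $H$ is abelian. Tracing the square, $\phi$ sends this commutator to $[g_1,g_2]$, while $a_G$ applied to $\theta\times\theta$ of the same pair of cosets also yields $[g_1,g_2]$; hence the diagram commutes and $G\times H$ is isoclinic to $G$.

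Finally, noting that $|G\times H|=|G|\,|H|$, I would apply Theorem~\ref{A_B_isoclinism} to the isoclinic pair, taking $G\times H$ as the first group and $G$ as the second, to obtain $A_{G\times H}(t)=A_G\!\left(\tfrac{|G\times H|}{|G|}t\right)=A_G(|H|t)$, and identically $B_{G\times H}(t)=B_G(|H|t)$. The only step demanding care is the bookkeeping of the scaling factor: one must keep $G\times H$ in the role of the ``first'' group so that the ratio of orders comes out as $|H|$ and not $|H|^{-1}$. There is no serious obstacle here, as the whole argument is a routine verification of isoclinism followed by a direct invocation of the theorem.
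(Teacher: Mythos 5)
Your proposal is correct and follows exactly the same route as the paper: the paper's proof simply cites the fact that $G$ is isoclinic to $G\times H$ when $H$ is abelian and invokes Theorem~\ref{A_B_isoclinism}, which is precisely your argument. The only difference is that you spell out the verification of the isoclinism (the isomorphisms $\theta$, $\phi$ and the commuting square), a detail the paper leaves implicit, and you correctly track the order ratio $|G\times H|/|G|=|H|$.
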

\begin{proof}
  This follows from the fact that $G$ is isoclinic to $G\times H$ when $H$ is abelian, and Theorem~\ref{A_B_isoclinism}.
\end{proof}
\begin{definition}
  [Family Invariant, \cite{Hall}]
  A quantity depending on a variable group is called a \emph{family invariant} if it is the same for any two isoclinic groups.
\end{definition}
Another immediate corollary of Theorem~\ref{A_B_isoclinism} is:
\begin{theorem}
  \label{corollary:family-invariants}
  The rational functions $A_G(t/|G|)$ and $B_G(t/|G|)$ are family invariants of the group $G$.
\end{theorem}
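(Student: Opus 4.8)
The plan is to deduce this directly from Theorem~\ref{A_B_isoclinism} by a single change of variable. By the definition of a family invariant, what must be shown is that whenever $G$ and $H$ are isoclinic, the normalized functions agree: $A_G(t/|G|) = A_H(t/|H|)$ and $B_G(t/|G|) = B_H(t/|H|)$. Since isoclinism is an equivalence relation, it suffices to establish these two equalities for an arbitrary pair of isoclinic groups $G$ and $H$.

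For the first identity, I would take the relation $A_G(t) = A_H\left(\frac{|G|}{|H|}t\right)$ supplied by Theorem~\ref{A_B_isoclinism} and substitute $t \mapsto t/|G|$. The right-hand side then becomes $A_H\left(\frac{|G|}{|H|}\cdot\frac{t}{|G|}\right) = A_H(t/|H|)$, whence $A_G(t/|G|) = A_H(t/|H|)$, exactly as required. The argument for $B$ is verbatim the same: substituting $t \mapsto t/|G|$ into the second identity $B_G(t) = B_H\left(\frac{|G|}{|H|}t\right)$ collapses the scaling factor $|G|/|H|$ against the $1/|G|$, leaving $B_G(t/|G|) = B_H(t/|H|)$.

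The underlying point is that the scaling factor $|G|/|H|$ appearing in Theorem~\ref{A_B_isoclinism} is precisely what gets cancelled when each function is normalized by the order of its own group; after normalization both sides depend only on the common isoclinism data shared by $G$ and $H$. I expect there to be no genuine obstacle here: all of the substantive work---tracking centralizer orders through an isoclinism $(\theta,\phi)$ and the inductive analysis of the recursion~\eqref{eq:5}---has already been discharged in the proof of Theorem~\ref{A_B_isoclinism}. The present statement is merely a repackaging of that theorem into a form that is manifestly symmetric in $G$ and $H$, and therefore visibly constant on isoclinism families.
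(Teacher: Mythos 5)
Your proposal is correct and matches the paper's approach: the paper presents this theorem as an immediate corollary of Theorem~\ref{A_B_isoclinism}, with the implicit argument being exactly the substitution $t \mapsto t/|G|$ that you carry out explicitly. Your write-up simply makes that one-line cancellation of the scaling factor $|G|/|H|$ explicit, which is all that is needed.
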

\section{Counterexamples found by GAP}
\label{sec:counterexamples}
The converse of Theorem \ref{A_B_isoclinism} is not true.
Moreover neither $A$-equivalence nor $B$-equivalence implies the other.
The assertions in the following examples can be verified using GAP. 
The GAP code for computing $A_G(t)$ and $B_G(t)$ is given in the Appendix.
\begin{example}[$A$-equivalent and $B$-equivalent, but not isoclinic]
Consider the dihedral group $G(18,1)$ with presentation
\begin{equation*}
  \langle \alpha, \beta \mid \alpha^9=\beta^2=\beta^{-1}\alpha\beta\alpha= 1\rangle
\end{equation*}
and the generalized dihedral group $G(18,4)$ with presentation
\begin{equation*}
  \langle \alpha_1,\alpha_2, \beta \mid \alpha_1^3=\alpha_2^3=\beta^2=[\alpha_1,\alpha_2]=\beta^{-1}\alpha_1\beta\alpha_1=\beta^{-1}\alpha_2\beta\alpha_2=1\rangle.
\end{equation*}
Then
\begin{gather*}
  A_{G(18,1)}=A_{G(18,4)} = \frac{-98t^2 + 23t - 1}{324t^3 - 216t^2 + 29t - 1},\\
  B_{G(18,1)}=B_{G(18,4)} = \frac{-t^2 + 6t - 1}{18t^3 - 29t^2 + 12t - 1}.
\end{gather*}
But these groups are not isoclinic, since they both have trivial centre, but are not isomorphic to each other--$G(18,1)$ has exponent $18$, while $G(18,4)$ has exponent $6$.
\end{example}
\begin{example}[$B$-equivalent, but not $A$-equivalent]
Consider the group $G(54,6)$ with polycyclic presentation:
\begin{multline*}
  \langle \alpha_1, \alpha_2, \alpha_3, \beta \mid \alpha_1^2=\alpha_2^3=\alpha_3^9=\beta^3=1,~ \alpha_1\alpha_3\alpha_1^{-1}=\beta\alpha_3^2,~ \alpha_1\beta\alpha_1^{-1}=\beta^{-1},\\  \alpha_2\alpha_3\alpha_2^{-1}=\beta^{-1}\alpha_3 \rangle
\end{multline*}
and group $G(54,8)$ with polycyclic presentation:
\begin{multline*}
  \langle \alpha_1, \alpha_2, \alpha_3, \beta \mid \alpha_1^2=\alpha_2^3=\alpha_3^3=\beta^3=1,~ \alpha_1\alpha_2\alpha_1^{-1}=\alpha_2^{-1}, \\\alpha_1\alpha_3\alpha_1^{-1}=\alpha_3^{-1},~ \alpha_2\alpha_3\alpha_2^{-1}=\beta^{-1}\alpha_3 \rangle.
\end{multline*}
Then
\begin{align*}
  A_{G(54,6)}(t)&=\frac{-181764t^4+44604t^3-3477t^2+104t-1}{1417176t^5-551124t^4+75330t^3-4455t^2+114t-1
},\\
  A_{G(54,8)}(t)&=\frac{-390t^2+59t-1}{2916t^3-864t^2+69t-1
},
\end{align*}
whereas
\begin{align*}
  B_{G(54,6)}=B_{G(54,8)}= \frac{-3t^2+8t-1}{162t^3-99t^2+18t-1
}.\\
\end{align*}
\end{example}
\begin{example}[$A$-equivalent, but not $B$-equivalent]
Consider the group \newline $G(128, 2022)$ with polycyclic presentation:
\begin{multline*}
  \langle \alpha_1, \alpha_2, \alpha_3, \alpha_4, \alpha_5, \alpha_6, \beta \mid \alpha_1^2=\alpha_2^2=\alpha_3^2=\alpha_4^2=\alpha_5^4=\alpha_6^4=\beta^2=1,\\ \alpha_1\alpha_2\alpha_1^{-1}=\beta\alpha_5\alpha_2,~ \alpha_1\alpha_3\alpha_1^{-1}=\beta\alpha_6\alpha_3,~ \alpha_1\alpha_5\alpha_1^{-1}=\beta\alpha_5,~\alpha_1\alpha_6\alpha_1^{-1}=\beta\alpha_6,~\alpha_2\alpha_3\alpha_2^{-1} \\=\beta\alpha_3,~\alpha_2\alpha_4\alpha_2^{-1}=\beta\alpha_5\alpha_4,~\alpha_2\alpha_5\alpha_2^{-1}=\beta\alpha_5,~
\alpha_3\alpha_6\alpha_3^{-1}=\beta\alpha_6,~
\alpha_4\alpha_5\alpha_4^{-1}=\beta\alpha_5\rangle
\end{multline*}
and $G(128, 1758)$ with polycyclic presentation:
\begin{multline*}
  \langle \alpha_1, \alpha_2, \alpha_3, \alpha_4, \alpha_5, \alpha_6, \beta \mid \alpha_1^2=\alpha_2^2=\alpha_3^2=\alpha_4^2=\alpha_5^2=\alpha_6^2=\beta^2=1,\\ \alpha_1\alpha_2\alpha_1^{-1}=\alpha_5\alpha_2,~ \alpha_1\alpha_3\alpha_1^{-1}=\alpha_6\alpha_3,~ \alpha_1\alpha_4\alpha_1^{-1}=\beta\alpha_4,~\alpha_2\alpha_6\alpha_2^{-1}\\=\beta\alpha_6,~\alpha_3\alpha_5\alpha_3^{-1}=\beta\alpha_5 \rangle.
\end{multline*}
Then
\begin{eqnarray*}
  A_{G(128,1758)}=A_{G(128,2022)}=\frac{-187904t^3+12176t^2-211t+1}{4194304t^4-491520t^3+17920t^2-240t+1
},
\end{eqnarray*}
whereas
\begin{eqnarray*}
B_{G(128,1758)} & = \frac{-4t^2-t-1}{512t^3-224t^2+28t-1}
 \end{eqnarray*}
 and
\begin{eqnarray*}
  B_{G(128,2022)} & = \frac{-256t^4-2448t^3-34t^2+33t-1
}{32768t^5-31744t^4+9920t^3-1240t^2+62t-1
}. 
\end{eqnarray*}
\end{example}
\section{AC-Groups}
\label{sec:ac-gps}
A finite group $G$ is called an $AC$-group if the centralizer of every non-central element of $G$ is abelian.
In this section we prove that two AC-groups of the same order are $A$-equivalent if and only if they are $B$-equivalent.
Recall the following characterization of AC-groups:
\begin{lemma}\textnormal{\cite[Lemma 3.2]{Rocke}}\label{acgroup}
  A finite group $G$ is an AC-group if and only if, for all $x,y \in G\setminus Z(G)$ such that $[x,y]=1$, $Z_G(x)=Z_G(y)$.
\end{lemma}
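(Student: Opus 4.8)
The plan is to prove both implications directly from the definitions, using the single structural fact that in an AC-group the centralizer of a non-central element is abelian (and, in the converse direction, showing exactly this). The whole argument is an exercise in chasing the hypothesis ``$[x,y]=1 \Rightarrow Z_G(x)=Z_G(y)$'' against the abelianness condition, so I expect no heavy machinery: the content is entirely in keeping track of which elements lie in which centralizer.

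For the forward direction, assume $G$ is an AC-group and take $x,y\in G\setminus Z(G)$ with $[x,y]=1$. First I would note that $y\in Z_G(x)$, and since $x$ is non-central, $Z_G(x)$ is abelian by hypothesis. Then for any $z\in Z_G(x)$, the two elements $y$ and $z$ both lie in the abelian group $Z_G(x)$, so $[y,z]=1$ and hence $z\in Z_G(y)$; this gives $Z_G(x)\subseteq Z_G(y)$. The reverse inclusion follows by the symmetric argument, using that $y$ is also non-central so $Z_G(y)$ is abelian and $x\in Z_G(y)$. Combining the two inclusions yields $Z_G(x)=Z_G(y)$.

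For the converse, assume the stated implication holds and let $x\in G\setminus Z(G)$; the goal is to show $Z_G(x)$ is abelian. I would take arbitrary $u,v\in Z_G(x)$ and show $[u,v]=1$. If either $u$ or $v$ lies in $Z(G)$ this is immediate, so the substantive case is $u,v\in G\setminus Z(G)$. The key step is the following bootstrap: since $u$ commutes with $x$ and both are non-central, the hypothesis gives $Z_G(u)=Z_G(x)$. But $v\in Z_G(x)=Z_G(u)$, so $v$ commutes with $u$, i.e.\ $[u,v]=1$. As $u,v$ were arbitrary, $Z_G(x)$ is abelian, so $G$ is an AC-group.

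The argument has no real obstacle; the only point that requires care is the degenerate case in the converse where one of the elements of $Z_G(x)$ happens to be central, which must be dispatched separately before invoking the hypothesis (which applies only to pairs of non-central elements). The conceptual heart is the one-line bootstrap $v\in Z_G(x)=Z_G(u)$, and everything else is bookkeeping around the non-centrality conditions.
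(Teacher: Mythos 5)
Your proof is correct in both directions: the forward implication correctly exploits that $y\in Z_G(x)$ and $z\in Z_G(x)$ commute inside the abelian group $Z_G(x)$ (and symmetrically with the roles of $x$ and $y$ exchanged), and the converse correctly handles the degenerate central case before applying the hypothesis to get $Z_G(u)=Z_G(x)$ and concluding $[u,v]=1$. There is nothing in the paper to compare against: the authors state this lemma as a quotation of \cite[Lemma 3.2]{Rocke} and give no proof, so your argument --- which is the standard one, and which incidentally never uses finiteness of $G$ --- simply supplies the missing details of the cited result.
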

\begin{remark} If $G$ is an $AC$ group, then
$\{Z_G(x)~|~ x\in G\setminus Z(G)\}$ is its set of maximal abelian subgroups.
\end{remark}
\begin{theorem}\label{thm:AC_AG=BG} Let $G$ and $H$ be two AC-groups of same order.
Then $A_G(t)=A_H(t)$ if and only if $B_G(t)=B_H(t)$. 
\end{theorem}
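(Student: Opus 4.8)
The plan is to reduce everything to a single combinatorial datum attached to an AC-group and then show that both $A_G(t)$ and $B_G(t)$ encode exactly this datum. Write $N=|G|$ and $z=|Z(G)|$, and for $z<m<N$ let $d_m$ be the number of non-central conjugacy classes of $G$ whose centralizer has order $m$; equivalently, if $z_m$ is as in the proof of Theorem~\ref{theorem:class-eq-A}, then $z_m=(N/m)d_m$ while $z_N=z$. I would first dispose of the degenerate cases: if exactly one of $G,H$ is abelian then the two generating functions on each side are visibly different (an abelian group of order $N$ gives $A_G(t)=B_G(t)=1/(1-Nt)$), so the biconditional holds vacuously; if both are abelian it is trivial. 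Thus I may assume both $G$ and $H$ are non-abelian.

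Next I would record the structural facts about AC-groups from Lemma~\ref{acgroup}, Lemma~\ref{cardinalitymaximal} and the following Remark: the distinct maximal abelian subgroups $M_1,\dots,M_s$ of $G$ are exactly the centralizers of non-central elements, each strictly contains $Z(G)$, and $G\setminus Z(G)=\bigsqcup_i (M_i\setminus Z(G))$. In particular $s\ge 2$ and $\sum_i|M_i|=N+(s-1)z$. Using \eqref{eq:6} I can write $A_G(t)=\frac{z/N}{1-Nt}+\sum_{m}\frac{d_m/m}{1-mt}$, and feeding the AC-hypothesis into \eqref{eq:5} (every proper centralizer is abelian, so $B_H(t)=1/(1-|H|t)$ for each subgroup $H$ occurring in the sum) a short partial-fraction computation gives $B_G(t)=\frac{c_z}{1-zt}+\sum_{m}\frac{d_m/(m-z)}{1-mt}$, where $c_z=1-\sum_m d_m/(m-z)$. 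Since $z<m<N$, all the indicated poles are distinct, so each of these expressions is the unique partial-fraction decomposition.

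The crux is to control the sign of $c_z$, and this is the main obstacle. I would compute $\sum_m d_m/(m-z)$ by summing over elements rather than over classes: grouping $x\in G\setminus Z(G)$ according to the unique $M_i$ containing it gives $\sum_m\frac{d_m}{m-z}=\frac1N\sum_{x\notin Z(G)}\frac{|Z_G(x)|}{|Z_G(x)|-z}=\frac1N\sum_i|M_i|=1+\frac{(s-1)z}{N}$, so that $c_z=-(s-1)z/N<0$. The resolution of this point is exactly what makes the theorem work: every pole of $B_G(t)$ arising from a centralizer order $m$ has strictly positive coefficient $d_m/(m-z)$, whereas the pole at $t=1/z$ has strictly negative coefficient. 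Hence $|Z(G)|$ is recoverable from $B_G(t)$ as the reciprocal of the location of the unique pole with negative coefficient.

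Finally I would assemble the equivalence. For $A_G=A_H\Rightarrow B_G=B_H$: by Theorem~\ref{theorem:class-eq-A} the class equations agree, so $z$ and all $z_m$ (hence all $d_m$) agree, and the displayed formula for $B_G$ forces $B_G=B_H$. For the converse $B_G=B_H\Rightarrow A_G=A_H$: uniqueness of partial fractions together with the sign computation lets me read off $z$ as the growth rate of the negative-coefficient pole, after which the remaining poles and their coefficients give each $m$ and $d_m=\big(\text{coefficient}\big)(m-z)$; thus $z$ and all $z_m$ agree for $G$ and $H$, the class equations agree, and $A_G=A_H$ by Theorem~\ref{theorem:class-eq-A}.
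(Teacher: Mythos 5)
Your proposal is correct, and although it follows the paper's overall skeleton---express $A_G(t)$ via \eqref{eq:6} and $B_G(t)$ via the AC-specialization of \eqref{eq:5} as partial fractions indexed by centralizer orders, then get $A\Rightarrow B$ from Theorem~\ref{theorem:class-eq-A} and $B\Rightarrow A$ from uniqueness of partial fractions---it contains one genuinely new ingredient that the paper's proof does not have: the exact evaluation of the central-pole coefficient
\begin{displaymath}
  c_z \;=\; 1-\sum_m \frac{d_m}{m-z} \;=\; -\frac{(s-1)z}{N}\;<\;0,
\end{displaymath}
obtained by summing over the partition $G\setminus Z(G)=\bigsqcup_i\bigl(M_i\setminus Z(G)\bigr)$ into maximal abelian subgroups (Lemmas~\ref{acgroup} and~\ref{cardinalitymaximal}). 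The paper's proof concludes ``$r=l$, $n_i=m_i$ and $C_G(n_i)=C_H(m_i)$'' from uniqueness of partial fractions alone, but that step is incomplete as stated: uniqueness matches only the poles carrying \emph{nonzero} coefficients, and without knowing the sign (or even the nonvanishing) of the coefficient at $t=1/|Z(G)|$ one cannot a priori exclude a crossed matching in which $|Z(G)|\neq|Z(H)|$ and the central pole of $B_G$ coincides with a non-central pole of $B_H$. Your sign computation closes exactly this gap: the central pole is the unique pole with negative coefficient (all others have coefficient $d_m/(m-z)>0$), so $|Z(G)|$, and then each pair $(m,d_m)$, is intrinsically recoverable from $B_G(t)$. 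As a by-product, your argument shows that for non-abelian AC-groups $B$-equivalence alone already forces $|Z(G)|=|Z(H)|$ and indeed $|G|=|H|$ (since $N=z/(1-\sum_m d_m/m)$), whereas the paper invokes the equal-order hypothesis and Lemma~\ref{lemma:AC_same_k(G)} at this point; your explicit handling of the abelian degenerate cases is likewise a detail the paper leaves implicit. In short: same route, but your extra lemma is what makes the decisive partial-fractions step airtight.
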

\begin{proof}
  Let $n_1,\ldots, n_r$ (with $n_1=|G|> n_2>\cdots >n_r$) be the distinct cardinalities of the centralizers of elements of $G$ and 
  $m_1,\ldots, m_l$ (with $m_1=|H|> m_2>\cdots >m_l$) be the distinct cardinalities of the centralizers of elements of $H$.
  For each positive integer $n$, let $C_G(n)$ (resp. $C_H(n)$) denote the number of conjugacy classes of $G$ (resp. $H$) whose centralizer has cardinality $n$.
  Now, for any abelian group $K$, $B_{K}(t)=(1-|K|t)^{-1}$.
  Using this  and equation (\ref{eq:5}), we have
\begin{align}\label{eq:AC_equal_B_G(t)}
B_G(t) & = \frac{1}{1-|Z(G)|t} + \frac{1}{1-|Z(G)|t}\sum_{i=2}^r \frac{C_G(n_i)t}{1-n_it} \nonumber \\
 & = \frac{-1}{|Z(G)|t-1} + \sum_{i=2}^r \bigg(\frac{C_G(n_i)/(n_i-|Z(G)|)}{|Z(G)|t-1} 
          +\frac{-C_G(n_i)/(n_i-|Z(G)|)}{n_it-1}\bigg)\nonumber \\
                     & = \frac{-1+\sum_{i=2}^r C_G(n_i)/(n_i-|Z(G)|)}{|Z(G)|t-1} + \sum_{i=2}^r \frac{-C_G(n_i)/(n_i-|Z(G)|}{n_it-1},
\end{align}
the partial fractions expansion of $B_G(t)$.
Similarly, the partial fractions expansion of $B_H(t)$ is
 \begin{align}\label{eq1:AC_equal_B_H(t)}
B_H(t) & = \frac{-1+\sum_{i=2}^l C_H(m_i)/(m_i-|Z(H)|)}{|Z(H)|t-1} + \sum_{i=2}^l \frac{-C_H(m_i)/(m_i-|Z(H)|)}{m_it-1}.
\end{align} 
By the uniqueness of partial fractions decompositions, $B_G(t)=B_H(t)$ if and only if $r=l$, $n_i=m_i$ and $C_G(n_i)=C_H(m_i)$ for $i=1,\dotsc,r$. When $|G|=|H|$, this is equivalent to $G$ and $H$ having the same class equation, and hence (by Theorem~\ref{theorem:class-eq-A}, equivalent to $A_G(t)=A_H(t)$.
\end{proof}

We end this section by mentioning the divisibility relation between $|G|$ and the leading coefficient of the denomination function of rational function $B_G(t)$.

\begin{proposition}\label{prop:divisibility_B_G(t)} Let $G$ be a finite AC-group and let $B_G(t)=\frac{p_G(t)}{q_G(t)}$, where $p_G(t), q_G(t)\in \mathbb{Z}[x]$. Then the leading coefficient of $q_G(t)$ is divisible by $|G|$. 
\end{proposition}
\begin{proof}
  Suppose $n_1,\ldots, n_r$ (with $n_1=|G|> n_2>\cdots >n_r$ and $n_r > |Z(G)|$) are the cardinalities of the centralizers of elements of $G$.
  By (\ref{eq:AC_equal_B_G(t)}), if we write $B_G(t)=p(t)/q(t)$ in lowest terms, then the leading coefficient of $q(t)$ is $|Z(G)|\prod_{i=2}^r n_i$.
  Thus is suffices to prove that $|Z(G)|\prod_{i=2}^r n_i$ is divisible by $|G|$.
 To see this, consider the class equation  
 \begin{displaymath}
   |G|=|Z(G)|+ \sum_{i=2}^rC_G(n_i)\frac{|G|}{n_i}
\end{displaymath}
of $G$. Multiply both sides of the above equation by 
$\prod_{i=2}^r n_i$ to get 
\begin{eqnarray*} 
|G|\prod_{i=2}^r n_i&=& |Z(G)|\prod_{i=2}^r n_i + \sum_{i=2}^r k_iC_G(n_i)|G|~~~~~~~~~~~~~\hspace{.3cm}(\textnormal{for some}~ k_i\in \mathbb{N}).
\end{eqnarray*}
This shows that $|G|$ divides $|Z(G)|\prod_{i=2}^r n_i$ and hence we get the desired result. 
\end{proof}

\begin{remark}\label{remark_ACgroup}
 It is easy to see that Proposition \ref{prop:divisibility_B_G(t)} is also true for the rational function $A_G(t)$. 
Now, suppose $G$ is an AC-group. Then in view of the proof of the Theorem \ref{thm:AC_AG=BG}, we  
have $B_G(t)=\frac{p(t)}{q(t)}$, where $p(t), q(t)\in \mathbb{Z}[x]$, and degree of $q(t)=(1-|Z(G)|t)\prod_{i=2}^r(1-n_it)$ is $r$.  The leading coefficient of $q(t)$  is equal to $(-1)^r|Z(G)|\prod_{i=2}^r n_i$.
\end{remark}

\section{Open problem}\label{question}
Theorem \ref{theorem:class-eq-A} asserts that two finite groups are $A$-equivalent if and only if they have the same class equation.
We do not know of an analogous result for $B$-equivalence.
\begin{question}
Is there a simple characterization of $B$-equivalence?
\end{question}

\section{Appendix}\label{GAP: code}
We record our GAP code used for computing $A_G(t)$ and $B_G(t)$.
\begin{Verbatim}%[breaklines=true, breakanywhere=true]

t:=Indeterminate(Rationals, "t");

B:=function(g)
if (IsAbelian(g)=true) then
return 1/(1-t*Size(g));
else
return (1+Sum( Filtered(ConjugacyClasses(g),x->Size(x)>1),
  i-> t*B(Centralizer(g,Representative(i))) ) )/(1-t*Size(Center(g)));
fi;
end;

A:=function(g)
return (Sum (Elements(g), x->1/(1-t*Size(Centralizer(g,x)))))/Size(g);
end;
\end{Verbatim}

\bibliographystyle{amsalpha}

\bibliography{references}

\end{document}